\newtheorem {theorem}{Theorem}
\newtheorem {lemma}{Lemma}
\newtheorem{proposition}[theorem]{Proposition}
\newtheorem{corollary}[theorem]{Corollary}
\newtheorem {question}{Question}
\newcommand\F{\mathbb{F}}
\begin{document}

	\title{A simple polynomial for a transposition over finite fields }
	
	\author{Amr Ali Abdulkader  Al-Maktry}
	\address{\hspace{-12pt}Department of Analysis and Number Theory (5010) \\
		Technische Universit\"at Graz \\
		Kopernikusgasse 24/II \\
		8010 Graz, Austria}
	\email{almaktry@math.tugraz.at}
	\urladdr{}
	
	
	\begin{abstract}
Let $q>2$, and  let $a$ and $b$ be two elements of the finite field $\F_q$ with $a\ne 0$.   Carlitz represented the 
 transposition $(0a)$ by a polynomial of degree $(q-2)^3$. In this note, we represent the transposition
	$(ab)$ by a polynomial of degree $q-2$. Also, we use this polynomial to construct polynomials that represent permutations of finite local rings with residue field $\F_q$.
			\end{abstract}
	
	\maketitle

In his proof of the main result of \cite{carltranspo}, Carlitz showed, for a non-zero element $a$ of the finite field $\F_q$
of $q>2$ elements, 
that  the transposition $(0a)$ can be induced by the following polynomial

\begin{equation}
	g_a(x) = -a^2\Bigl(\Bigl((x-a)^{q-2}+\frac{1}{a}\Bigr)^{q-2}-a\Bigr)^{q-2}.
\end{equation}
By direct substitution one easily see that the polynomial $g_a$ induces the transposition $(0a)$.  However,
Carlitz has never explained how he has constructed such a complicated polynomial. It seems that there is an ambiguous secret beyond this polynomial. This was my impression when I first met this polynomial while working on my master's thesis.  Nevertheless, the ambiguity of this polynomial attracted Zieve \cite{carlex} who revealed the secret of this polynomial in the end. He showed that $(01)$ can be induced by the polynomial  $f(x)=r ( r(r(x)))$,  where  $r(x)=1-x^{q-2}$, and then by using linear transformations to obtain the required polynomial representing $(0a)$. we remark here that the obtained polynomial via his procedure is equivalent to Carlitz polynomial $g_a$ and of degree $(q-2)^3$.   Later Ugoliny \cite{carlex2} noticed that $g_a$ can be deduced by using Hua’s identity.

In this note,  we obtain  a polynomial of degree $q-2$ representing the transposition $(ab)$ for any two different elements
$a$ and $b$ of the finite field $\F_q$. To be fair,
  our polynomial is a generalization of that of Martin \cite{simpletr}.  Martin  proved that the polynomial
\begin{equation}\label{Mart}
h(x)=x^{ p-2}+x^{ p-3}+\cdots+x^2+2x+1
\end{equation}
represents the transposition $(01)$ over the filed $\F_p$ for every odd prime $p$.
Further, he showed that polynomial
\begin{equation}
(b-a)\Big(\Big(\frac{x-a}{b-a}\Big)^{(p-2)}+ \cdots+\Big(\frac{x-a}{b-a}\Big)^2+2\Big(\frac{x-a}{b-a}\Big)+1\Big)+a
\end{equation}
induces the transposition $(ab)$ over $\F_p$. However, he overlooked that his argument is quite valid
for any finite field $\F_q$ with $q>2$. Indeed, let us write $\F_q =\{a_0,a_1,\ldots,a_{q-1}\}$ with $a_0=0$ and $a_1=1$. Then the polynomial $\prod_{i=0}^{q-1}(x-a_i)$ divides the polynomial $x^q-x$ since each $a_i$ is a root of  $x^q-x$. But then, since they are monic polynomials of the same degree, we must have $\prod_{i=0}^{q-1}(x-a_i)=(x^q-x)$. Thus,
\[x(x-1)\prod_{i=2}^{q-1}(x-a_i)=x(x^{q-1}-1)=x(x-1)(x^{q-2}+x^{q-1}+\cdots+x^2+x+1).\]
Hence, \[\prod_{i=2}^{q-1}(x-a_i)=x^{q-2}+\cdots+x^2+x+1,\] whence the polynomial
$l(x)=x^{q-2}+\cdots+x^2+x+1$   maps $a_i$ to $0$ for $i=2,\ldots,q-1$. It will not be hard now to see that the polynomial 
\begin{equation}\label{hak}
	f(x)=x^{q-2}+x^{q-1}+\cdots+x^2+2x+1
\end{equation}
induces the transposition $(01)$ (compare (\ref{hak}) with (\ref{Mart})).

Now let $a$ and $b$ be two different elements of $\F_q$ and consider the  polynomial $k(x)=l_2(f(l_1(x)))$ where $l_1(x)=\frac{x-a}{b-a}$ and $l_2(x)=(b-a)x+a$. Then, since $f$ represents the transposition $(01)$, we have for an element $c\in \F_q$ that
$k(c)=l_2(f(l_1(c)))= \begin{cases}l_2(f(0))=(b-a)1+a=b & \textnormal{if } c=a,\\
 	l_2(f(1))=(b-a)0+a=a & \textnormal{if } c=b,\\
 	l_2(f(\frac{c-a}{b-a}))=(b-a)\frac{c-a}{b-a}+a=c & \textnormal{if } c\ne a,b.	
 \end{cases}$

But this means that $k$ represents $(ab)$. Finally, direct calculations show that
\begin{equation}
k(x)=(b-a)\Big(\Big(\frac{x-a}{b-a}\Big)^{(q-2)}+ \cdots+\Big(\frac{x-a}{b-a}\Big)^2+2\Big(\frac{x-a}{b-a}\Big)+1\Big)+a.
\end{equation}
We have just proved the following Theorem.
\begin{theorem}\label{haky}
	Let $\F_q$ be a finite field with $q>2$ elements, and let $a$ and $b$ be two different elements of $\F_q$.
	Then the polynomial 
	\begin{equation}\label{hakigen}f_{a,b}(x)=(b-a)\Big(\Big(\frac{x-a}{b-a}\Big)^{(q-2)}+ \cdots+\Big(\frac{x-a}{b-a}\Big)^2+2\Big(\frac{x-a}{b-a}\Big)+1\Big)+a
	\end{equation}
	represents the transposition $(ab)$.  
\end{theorem}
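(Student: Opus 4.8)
The plan is to first settle the special case $(a,b)=(0,1)$ and then transport it along an affine change of variable, exactly along the lines of the discussion preceding the theorem.

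The key auxiliary fact I would establish is that the polynomial $l(x)=x^{q-2}+\cdots+x^2+x+1$ vanishes at every element of $\F_q$ except $0$ and $1$. To see this, write $\F_q=\{a_0,\dots,a_{q-1}\}$ with $a_0=0$ and $a_1=1$; each $a_i$ is a root of $x^q-x$, so $\prod_{i=0}^{q-1}(x-a_i)$ divides $x^q-x$, and since both are monic of degree $q$ they are equal. Dividing this identity by $x(x-1)$ yields $\prod_{i=2}^{q-1}(x-a_i)=l(x)$, so $l(a_i)=0$ for $i\ge 2$. Separately one records $l(0)=1$ (the constant term) and $l(1)=q-1=-1$ in $\F_q$ (a sum of $q-1$ ones).

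Next I would observe that $f(x)=x+l(x)$, since subtracting $l(x)$ from the polynomial in (\ref{hak}) leaves exactly the linear term. Hence $f(c)=c$ for $c\notin\{0,1\}$, $f(0)=0+1=1$, and $f(1)=1+(-1)=0$, so $f$ induces $(01)$. Then, introducing the mutually inverse affine bijections $l_1(x)=\frac{x-a}{b-a}$ and $l_2(x)=(b-a)x+a$ of $\F_q$, I would set $k(x)=l_2(f(l_1(x)))$. Because $l_1$ maps $a\mapsto 0$ and $b\mapsto 1$ bijectively and $l_2=l_1^{-1}$, this conjugation carries $(01)$ to $(ab)$: one checks $k(a)=l_2(f(0))=l_2(1)=b$, $k(b)=l_2(f(1))=l_2(0)=a$, and $k(c)=l_2(l_1(c))=c$ for $c\ne a,b$. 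Finally, a routine substitution — replacing each $x^j$ in $f$ by $\bigl(\frac{x-a}{b-a}\bigr)^j$ and applying $l_2$ term by term — identifies $k(x)$ with the polynomial $f_{a,b}(x)$ of (\ref{hakigen}); its leading coefficient $(b-a)^{-(q-3)}$ is nonzero, so the degree is indeed $q-2$.

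Every computation here is elementary. The one spot deserving a moment's care is the evaluation of $f$ at $x=1$ in the base case, where the count of the $q-1$ monomials of $l$ and the relation $q\equiv 0$ in $\F_q$ must be used correctly; the affine-conjugation step, and hence the passage from $(01)$ to a general transposition $(ab)$, is purely formal and is the part I expect to be least troublesome.
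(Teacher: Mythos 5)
Your proposal is correct and follows essentially the same route as the paper: establish that $l(x)=x^{q-2}+\cdots+x+1$ kills every element other than $0$ and $1$ via the factorization $x^q-x=\prod_{c\in\F_q}(x-c)$, deduce that $f=x+l$ induces $(01)$, and conjugate by the affine maps $l_1$, $l_2$ to obtain $(ab)$. The only difference is that you spell out the evaluations $l(0)=1$ and $l(1)=q-1=-1$, which the paper leaves as "not hard to see."
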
 
From now on let $R$ be a finite local ring with maximal ideal $M\ne \{0\}$ and residue filed $R/M=\F_q$.

Polynomials representing permutations are called permutation polynomials while the induced permutations are called polynomial permutations. 
Next, we intend to construct permutation polynomials over finite commutative local rings with residue field $\F_q$ employing the permutation polynomial of Theorem~\ref{haky}. For this purpose, we need the following celebrated criteria for permutation polynomials over finite local rings which is a special case of a more general result due to N\"obauer \cite{Nopercon}.  
\begin{lemma} \cite[Theorem~2.3]{Nopercon}\cite[Theorem~ 3]{Necha} \label{percert}
	Let $R$ be a finite local ring.   Let   $f\in R[x]$ and let $f'$ be its formal derivative. 
	Then $f$ is a permutation polynomial on $ R$ 
	if and only if:
	\begin{enumerate}
		\item $f$ induces a permutation of $R/M$;
		\item  for  each $r\in R$,   $f'(r)\ne 0\mod{M}$.
	\end{enumerate}
\end{lemma}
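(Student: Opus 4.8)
The plan is to prove both implications directly, leaning on two standard facts about a finite local ring $(R,M)$: first, that $R\setminus M$ is precisely the unit group $R^\times$, so an element of $R$ is a unit exactly when it does not lie in $M$; and second --- needed only for the forward direction --- that $M\supsetneq M^2$, which follows from Nakayama's lemma because $M\ne\{0\}$ (and $M$ is finitely generated, being finite). The main tool is the purely formal identity, valid for every $f\in R[x]$,
\[
f(x+y)-f(x)=y\,h(x,y),\qquad h(x,0)=f'(x),
\]
obtained by expanding $(x+y)^i-x^i=y\big((x+y)^{i-1}+(x+y)^{i-2}x+\dots+x^{i-1}\big)$ term by term; this substitutes for a Taylor expansion and so is harmless in positive characteristic.

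For the direction ``permutation polynomial $\Rightarrow$ (1) and (2)'': the reduction $\pi\colon R\to R/M$ is a surjective ring homomorphism, and the identity above shows $f(r)\equiv f(s)\pmod M$ whenever $r\equiv s\pmod M$, so $f$ descends to a well-defined self-map $\bar f$ of the finite set $R/M$. Since $f$ is onto $R$, the map $\bar f$ is onto $R/M$, hence a bijection, which is (1). For (2), suppose $f'(r)\in M$ for some $r\in R$. Then for every $m\in M$ we get $f(r+m)-f(r)=m\,h(r,m)$ with $h(r,m)\equiv f'(r)\equiv 0\pmod M$, so $f(r+m)-f(r)\in M\cdot M=M^2$; that is, $f$ sends the coset $r+M$ into the coset $f(r)+M^2$. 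As $\lvert r+M\rvert=\lvert M\rvert>\lvert M^2\rvert=\lvert f(r)+M^2\rvert$, the restriction of $f$ to $r+M$ cannot be injective, contradicting that $f$ permutes $R$.

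For the converse, assume (1) and (2); since $R$ is finite it is enough to show $f$ is injective. If $f(r)=f(s)$, then applying $\pi$ and invoking (1) forces $\bar r=\bar s$, so $s=r+m$ with $m\in M$. The identity then gives $0=f(s)-f(r)=m\,h(r,m)$, and because $h(r,m)\equiv f'(r)\pmod M$ while $f'(r)\notin M$ by (2), the element $h(r,m)$ lies outside $M$ and is therefore a unit; hence $m=0$ and $r=s$. (Alternatively one can prove surjectivity by a Hensel-type argument: take a solution of $f(x)=c$ modulo $M$ and correct it successively modulo $M^2,M^3,\dots$, using that $f'$ evaluated at each approximation is a unit by (2) and that $M$ is nilpotent; after finitely many steps one reaches an exact solution.)

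The only genuinely delicate point is the strict inequality $\lvert M\rvert>\lvert M^2\rvert$ used in step (2): it is exactly here that the hypothesis $M\ne\{0\}$ is consumed, and it is the reason the criterion fails when $R$ is a field --- for instance $x^3$ permutes $\F_5$ while its derivative vanishes at $0$. Beyond that, everything is bookkeeping with the formal identity above and the dichotomy ``unit or in $M$''.
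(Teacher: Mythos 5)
The paper does not prove this lemma at all: it is quoted from N\"obauer and Nechaev as a known criterion, so there is no in-paper argument to compare yours against. Your proof is correct and is essentially the standard one for this classical result. The telescoping identity $f(x+y)-f(x)=y\,h(x,y)$ with $h(x,0)=f'(x)$ correctly substitutes for a Taylor expansion; the converse direction uses only that $h(r,m)\equiv f'(r)\pmod M$ together with the dichotomy $R\setminus M=R^\times$ in a local ring, and injectivity suffices by finiteness; the necessity of condition (2) uses the pigeonhole count $\lvert M\rvert>\lvert M^2\rvert$, which as you note rests on $M\ne\{0\}$ (via Nakayama, or via nilpotence of $M$ in a finite ring). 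You are also right that the criterion fails verbatim for fields; the paper's standing hypothesis, declared in the sentence immediately preceding the lemma, that $M\ne\{0\}$ is exactly what makes the statement true, and your proof consumes that hypothesis in the one place where it must.
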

Also, we notice here that we can replace the elements of $\F_q$ with a complete system  of residue modulo $M$ from the elements of $R$. In this sense, we can represent a polynomial over  $\F_q$ by a polynomial over $R$. Clearly, this representation is not unique.

 Now
 we give a simple procedure  for constructing permutation polynomials on finite local rings by using permutation polynomials over finite fields.

\begin{proposition}\label{perconst}
	Let $R$ be a finite commutative local ring and $\F_q$ its residue field with $q=p^n$ for some prime number $p$. Let $f,g,l\in R[x]$ such that $f$ induces a permutation of $\F_q$, and   $g(r)\ne 0\mod M$ for every $r\in R$. Then the polynomial 
	\begin{equation}
		h(x)= f(x)+(f'(x)+g(x))(x^q-x)+pl(x)
	\end{equation}
is a permutation polynomial over $R$. That is, $h$ induces a permutation of $R$. 
	\end{proposition}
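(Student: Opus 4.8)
The natural strategy is to verify the two conditions of N\"obauer's criterion, Lemma~\ref{percert}, for the polynomial $h$. Write $r\mapsto\bar r$ for reduction modulo $M$, and note two elementary facts that drive the whole computation. First, since $R/M=\F_q$ has characteristic $p$, the element $p\cdot 1_R$ lies in $M$, and hence so does $q\cdot 1_R=p^n\cdot 1_R$. Second, for every $r\in R$ we have $r^q-r\in M$, because $\bar r^{\,q}=\bar r$ in $\F_q$.

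For the first condition I would simply reduce the defining equation of $h$ modulo $M$. For $r\in R$, the summand $(f'(r)+g(r))(r^q-r)$ lies in $M$ by the second fact, and $p\,l(r)\in M$ by the first, so $h(r)\equiv f(r)\pmod M$. Letting $r$ run over $R$, the function induced by $h$ on $R/M$ coincides with the one induced by $f$, which is a permutation of $\F_q$ by hypothesis; hence condition (1) holds.

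For the second condition I would differentiate:
\[
h'(x)=f'(x)+\bigl(f''(x)+g'(x)\bigr)(x^q-x)+\bigl(f'(x)+g(x)\bigr)\bigl(qx^{q-1}-1\bigr)+p\,l'(x),
\]
and then evaluate at an arbitrary $r\in R$ and reduce modulo $M$. Again $(f''(r)+g'(r))(r^q-r)\in M$ and $p\,l'(r)\in M$; and since $q\cdot 1_R\in M$, the factor $qr^{q-1}-1$ reduces to $-1$. Therefore
$h'(r)\equiv f'(r)-\bigl(f'(r)+g(r)\bigr)=-g(r)\pmod M$, which is nonzero modulo $M$ by the hypothesis on $g$. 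Thus condition (2) holds, and Lemma~\ref{percert} yields that $h$ is a permutation polynomial over $R$.

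I do not expect a genuine obstacle: the argument is a short reduction-modulo-$M$ computation. The only delicate point is the identity $qx^{q-1}-1\equiv-1\pmod M$, which is precisely the mechanism by which the middle term $(f'(x)+g(x))(x^q-x)$ cancels the contribution $f'(x)$ coming from $f$ in $h'$ and replaces it by $-g(x)$, converting an arbitrary $f'$ into the controlled quantity $-g$. The summand $p\,l(x)$ plays no role in either verification; it is included only to enlarge the resulting family of permutation polynomials, since any multiple of $p$ may be added freely.
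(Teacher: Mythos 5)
Your proof is correct and takes essentially the same approach as the paper: verify the two conditions of N\"obauer's criterion (Lemma~\ref{percert}) by reducing $h$ and $h'$ modulo $M$, using $p\in M$, $r^q-r\in M$, and $q\equiv 0 \pmod M$ to get $h\equiv f$ and $h'(r)\equiv -g(r)$. If anything, your computation of $h'$ is more careful than the paper's, whose displayed formula for $h'$ omits the term $(f''(x)+g'(x))(x^q-x)$ --- harmless, since that term lies in $M$ at every $r\in R$, exactly as you observe.
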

\begin{proof}
	Since $p\in M$ and $(x^q-x)$ maps $R$ into $M$, we have that $h$ and $f$ represent the same function over
	$R/M=\F_q$. But, then $h$ represents  a permutation of $\F_q$   since $f$ is a permutation polynomial on $\F_q$. This shows the first assertion of Lemma~\ref{percert} is satisfied.
Now,	differentiating $h$ yields, $h'(x) =  qx^{q-1}(f'(x)+g(x))-g(x)+pl'(x)$. Therefore, for every $r\in R$,
we have by our choice of $g$
\begin{equation*}
	h'(r) =  qr^{q-1}(f'(r)+g(r)) -g(r)+pl'(r)=-g(r)\ne 0 \mod M.
\end{equation*}	 
This verifies   the second assertion of Lemma~\ref{percert} and completes the proof.	\end{proof}
As we mentioned earlier given two different elements of $\F_q$, we can consider them as elements of $R$ using a complete system of residue modulo $M$. Hence, the polynomial $f_{a,b}$ of Theorem~\ref{haky} can be considered as a polynomial over $R$. So,   as a consequence of Theorem~\ref{haky} and Proposition~\ref{perconst}, we have the following corollary.
\begin{corollary}
	Let $a,b\in R$ with $a\ne b\mod M$. Let   $g,l \in R[x]$ such that $g(r)\ne 0\mod M$ for every $r\in R$.  Then the polynomial 
	\begin{equation}\label{myconst}
		h(x)= f_{a,b}(x)+(f_{a,b}'(x)+g(x))(x^q-x)+pl(x)
	\end{equation}
represents an odd permutation of $R$.
\end{corollary}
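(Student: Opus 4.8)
The plan is to get ``$h$ is a permutation polynomial'' for free from the results already proved, and then to reduce the word ``odd'' to a sign computation organised along the cosets of $M$.

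First I would invoke Theorem~\ref{haky}: the polynomial $f_{a,b}$, viewed over $R$ by lifting its $\F_q$-coefficients through a fixed system of residues modulo $M$, induces on $R/M=\F_q$ exactly the transposition $(\bar a,\bar b)$ — a permutation of $\F_q$, and a nontrivial one since $a\not\equiv b\pmod M$. Then I would apply Proposition~\ref{perconst} with $f=f_{a,b}$ and with the given $g$ and $l$: both hypotheses of that proposition (``$f$ induces a permutation of $\F_q$'' and ``$g(r)\neq 0\bmod M$ for every $r\in R$'') are satisfied, so $h$ is a permutation polynomial of $R$. From the proof of the proposition I would also keep the observation that the permutation $\pi$ of $R$ induced by $h$ reduces modulo $M$ to the transposition $\tau=(\bar a,\bar b)$, because $p\in M$ and $x^q-x$ maps $R$ into $M$.

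To handle the parity I would use that $\pi$ covers $\tau$: it permutes the $q$ cosets of $M$ according to $\tau$, interchanging the two cosets over $\bar a$ and $\bar b$ and stabilising setwise each of the other $q-2$ cosets. Expressing the sign of such a block permutation through this structure — a wreath-product-type identity — yields
\[
\operatorname{sgn}(\pi)=\operatorname{sgn}(\tau)^{|M|}\cdot\varepsilon=(-1)^{|M|}\cdot\varepsilon ,
\]
where $\varepsilon$ is the product of the signs of the permutations that $\pi$ induces inside the $q-2$ stabilised cosets, times the sign of the permutation of the $2|M|$-element set $(a+M)\cup(b+M)$ arising from the two interchanged cosets. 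Since $|M|=p^{s}$ with $s\geq 1$, the factor $(-1)^{|M|}$ already separates the cases $p=2$ (contributing $+1$) and $p$ odd (contributing $-1$).

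The main obstacle will be pinning down $\varepsilon$, that is, understanding $\pi$ on each individual coset $c+M$. Writing $h(c+x)=\sum_{k}a_{k}x^{k}$ over $R$, one has $a_{0}=h(c)$, $a_{1}=h'(c)$, and $a_{k}m^{k}\in M^{2}$ for $k\geq 2$ and $m\in M$, so
\[
h(c+m)\equiv h(c)+h'(c)\,m\pmod{M^{2}}\qquad(m\in M),
\]
with $h'(c)\equiv-g(c)$ a unit modulo $M$ by the derivative computation in the proof of Proposition~\ref{perconst}. Hence $\pi$ on $c+M$ is, to first order, an affine bijection with unit linear part, and one would continue level by level along $M\supset M^{2}\supset\cdots$, tracking the $M$-adic orders of the surviving coefficients, to read off the cycle type of $\pi$ on each coset up to sign. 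The point needing the most care is whether the value of $\varepsilon$ so obtained is really independent of the admissible choices of $g$ and $l$; I would therefore first run the mechanism on small cases — say $R=\mathbb{Z}/p^{2}$ — and keep the contrast between $p=2$ and $p$ odd firmly in mind before concluding $\operatorname{sgn}(\pi)=-1$ in general.
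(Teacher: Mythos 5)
Your first paragraph reproduces essentially everything the paper itself offers in support of this corollary: Theorem~\ref{haky} gives that $f_{a,b}$ induces a transposition of $\F_q$, Proposition~\ref{perconst} then gives that $h$ is a permutation polynomial of $R$, and the induced permutation $\pi$ of $R$ reduces modulo $M$ to the transposition $(\bar a\,\bar b)$. Up to that point you and the paper agree. The paper, however, stops there and simply asserts oddness, whereas the entire content of the word ``odd'' sits in the part you honestly flag as open, namely the value of your $\varepsilon$ in $\operatorname{sgn}(\pi)=(-1)^{|M|}\cdot\varepsilon$. So your proposal does not prove the stated corollary; it proves only that $h$ is a permutation polynomial covering a transposition of the residue field.

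That remaining gap cannot be closed, because the parity claim is false, and the very test case you propose ($R=\mathbb{Z}/p^2$) refutes it. Take $R=\mathbb{Z}/9\mathbb{Z}$, $M=3R$, $a=0$, $b=1$, $g=1$, $l=0$. Then $f_{0,1}(x)=2x+1$, $f_{0,1}'(x)=2$, and $h(x)=2x+1+3(x^3-x)$. Since $x^3-x\equiv 0\pmod 3$ for every integer $x$, the summand $3(x^3-x)$ vanishes identically on $\mathbb{Z}/9\mathbb{Z}$, so $h$ induces the same map as $2x+1$, namely the permutation $(0\,1\,3\,7\,6\,4)(2\,5)$ of $\mathbb{Z}/9\mathbb{Z}$, which is even. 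In your bookkeeping: the two swapped cosets $\{0,3,6\}$ and $\{1,4,7\}$ carry the $6$-cycle, contributing the expected sign $(-1)^{|M|}=-1$, but the setwise fixed coset $\{2,5,8\}$ carries the odd permutation $(2\,5)$, so $\varepsilon=-1$ and the signs cancel. Hence $\varepsilon$ genuinely depends on $f$, $g$, $l$ and is not identically $1$; no refinement of the wreath-product computation or of the $M$-adic expansion $h(c+m)\equiv h(c)+h'(c)m\pmod{M^2}$ will force $\operatorname{sgn}(\pi)=-1$. (Note also that the paper itself observes two lines later that $2x+1$ induces a long cycle on $\mathbb{Z}/3^n\mathbb{Z}$; that same map is an instance of the corollary's construction and is even for $n=2$.) The most one can salvage, from your argument or the paper's, is that $h$ permutes $R$ and that its reduction modulo $M$ is the odd permutation $(\bar a\,\bar b)$ of $\F_q$ --- not that $\pi$ itself is odd.
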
 

The set of all polynomial permutations  of $R$ (permutations induced by polynomials over $R$), which we denote by $\mathcal{P}(R)$, is a subgroup of the symmetric group $S_R$ on the elements of $R$  (being a non-empty closed subset of a finite subgroup). It is well-known that this group is a proper subgroup of the symmetric group $S_R$ unless $R=\F_q $ when  in this case the group of polynomial permutations $\mathcal{P}(\F_q)$ is just the symmetric group $S_{\F_q}$.  It is evident that the set of all transpositions of $\F_q$ generates $\mathcal{P}(\F_q)$; that is the set of transpositions induced by  the  polynomials given in  Equation~(\ref{hakigen}) generates $\mathcal{P}(\F_q)$. Unfortunately,
 transpositions of  $ \F_q$  obtained by polynomials can not  be lifted into transpositions of $R$ through the construction of Proposition~\ref{perconst}. For instance, the polynomial $2x+1$ induces  the transposition $(01)$ over $\F_3$. However, it induces a permutation containing a cycle of length  greater than 2 over  $\mathbb{Z}/3^n\mathbb{Z}$ for every $n\ge 2$. 
 
 Finally, we close this note with a question concerning the relation between polynomial permutations induced by polynomials of the form~(\ref{myconst}) and the group of polynomial permutations $\mathcal{P}(R)$.
\begin{question}
	Let $A$ be the set of all polynomial permutations $R$ induced by polynomials constructed by Equation~ (\ref{myconst}).
Does the set $A$ generate the group  $\mathcal{P}(R)$?	
	\end{question}
\noindent {\bf Acknowledgment.}
The author is supported by the Austrian Science Fund (FWF):P 35788-N.

	\bibliographystyle{plain}
\bibliography{Carlpol}
\end{document}